\numberwithin{equation}{section}
\newtheorem{theorem}{Theorem}[section]
\newtheorem{lemma}[theorem]{Lemma}
\newtheorem{proposition}[theorem]{Proposition}
\newtheorem{corollary}[theorem]{Corollary}
\theoremstyle{definition}
\newtheorem{definition}[theorem]{Definition}
\theoremstyle{remark}
\newtheorem{remark}[theorem]{Remark}
\newtheorem{example}[theorem]{Example}
\newtheorem{problem}[theorem]{Problem}
\newtheorem{fact}[theorem]{Fact}
\newtheorem{notation}[theorem]{Notation}
\newtheorem{discussion}[theorem]{Discussion}
\newtheorem{acknowledgement}{Acknowledgement}
\newcommand{\Ass}{\operatorname{Ass}}
\newcommand{\im}{\operatorname{im}}
\newcommand{\Ker}{\operatorname{ker}}
\newcommand{\grade}{\operatorname{grade}}
\newcommand{\HH}{\operatorname{H}}
\newcommand{\Ht}{\operatorname{ht}}
\newcommand{\Ext}{\operatorname{Ext}}
\newcommand{\Hom}{\operatorname{Hom}}
\newcommand{\End}{\operatorname{end}}
\newcommand{\Beg}{\operatorname{beg}}
\newcommand{\reg}{\operatorname{reg}}
\newcommand{\depth}{\operatorname{depth}}
\newcommand{\lo}{\longrightarrow}
\newcommand{\fm}{\frak{m}}
\newcommand{\fp}{\frak{p}}
\begin{document}

\author[]{mohsen asgharzadeh }

\email{mohsenasgharzadeh@gmail.com}

\title[ ]
{Huneke's  degree-computing problem}

\subjclass[2010]{ 13F20; 13A02}
\keywords{Degree of generators;  Polynomial rings;  Symbolic powers.
 }
\dedicatory{}
\begin{abstract}We  deal with  a problem posted by Huneke on the degree of generators
of
symbolic powers.\end{abstract}
\maketitle

\smallskip
\section{Introduction}

Let $A_m:=K[x_1,\ldots,x_m]$ be the polynomial ring of $m$ variables over a field $K$. We drop the subscript $m$, when there is no doubt of confusion.
 Let $I$ be an ideal of $A$.
Denote the $n$-th\textit{ symbolic power} of  $I$ by $I^{(n)}:=\bigcap_{
\fp\in \Ass(I)}(I^nA_{\fp}\cap A)$.
Huneke \cite{h1} posted the following problem:

\begin{problem}\label{1.1}(Understanding symbolic powers).
Let $\fp\lhd A$ be a homogeneous and prime ideal generated in degrees $\leq D$. Is $\fp^{(n)}$ generated in degrees  $\leq Dn$?
\end{problem}

The problem is clear when $m<3$.
We present three observations in support of Huneke's problem.
The first one deals with rings of dimension 3:\\
\textbf{Observation A.}
Let $I\lhd A_3$ be a radical ideal and  generated in degrees $\leq D$. Then
$I^{(n)}$ generated in degrees  $< (D+1)n$ for all $n\gg0$.

The  point of this is to connect the Problem \ref{1.1} to the fruitful land  $H^0_{\fm}(-)$. This unifies our interest on symbolic powers as well as on the \textit{$(LC)$ property}.
The later has a role on \textit{tight closure} theory. It was introduced by   Hochster and   Huneke.

\begin{corollary}
Let $I\lhd A_3$ be any radical ideal. There is $D\in \mathbb{N}$ such that
$I^{(n)}$ is generated in degrees  $\leq Dn$ for all $n>0$.
\end{corollary}

\textbf{Observation B.}
Let $I\lhd A_4$ be an  ideal of linear type generated in degrees $\leq D$.  Then $I^{(n)}$ is generated in degrees  $\leq Dn$.

I am grateful to  Hop D. Nguyen for suggesting the following example to me:

\begin{example}\label{introduction}
There is a  radical ideal $I\lhd A_6$  generated in degrees $\leq 4$ such that $I^{(2)}$ does not generated in degrees $\leq2.4=8$.
\end{example}

There is  a simpler example over $A_7$. Via the flat extension $A_6\to A_n$ we may produce examples over $A_n$ for all $n>5$.
One has the following linear growth formula for symbolic powers:\\
\textbf{Observation C.} Let $I$ be any ideal such that the corresponding symbolic Rees algebra is finitely generated. There is $E\in \mathbb{N}$ such that
$I^{(n)}$ is generated in degrees  $\leq En$ for all $n>0$.

Observation C for monomial ideals  follows from \cite[Thorem 2.10]{her3} up to some well-known facts. In this special case, we can determine $E$:

\begin{corollary}
Let $I$ be a monomial ideal.  Let $f$ be the least common
multiple of the generating monomials of $I$. Then
  $I^{(n)}$ is generated in degrees  $\leq \deg(f)n$ for all $n>0$.
\end{corollary}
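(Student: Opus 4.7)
The plan is to establish the stronger combinatorial statement that every minimal monomial generator $\mu$ of $I^{(n)}$ divides $f^n$ componentwise. This immediately yields $\deg(\mu) \leq \deg(f^n) = n\deg(f)$, so the corollary follows. The argument is direct and does not require passing through Observation C or the cited Herzog--Hibi--Trung finite generation result.

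First I would record the standard monomial-ideal framework. Write $f=\prod_i x_i^{e_i}$, so $e_i$ is the maximum $x_i$-exponent appearing among the minimal generators $m_1,\ldots,m_r$ of $I$. Since $I$ is monomial, $I^n$ is monomial, every $\fp\in\Ass(I)$ is a monomial prime, and $I^{(n)}$ is itself a monomial ideal. Moreover, for a monomial $\mu$, the condition $\mu\in I^nA_{\fp}\cap A$ is equivalent to the existence of a monomial $u_{\fp}$ involving only variables outside $\fp$ with $u_{\fp}\mu\in I^n$; this uses that $I^n$ is a monomial ideal, so that if $u\mu\in I^n$ for some $u\notin\fp$, then some individual monomial summand $x^{\alpha}$ of $u$ with $x^{\alpha}\notin\fp$ already satisfies $x^{\alpha}\mu\in I^n$.

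The main step is an argument by contradiction. Let $\mu$ be a minimal monomial generator of $I^{(n)}$, and suppose some variable $x_i$ appears in $\mu$ to a power $a>ne_i$. I will show $\mu/x_i\in I^{(n)}$, contradicting minimality. Fix $\fp\in\Ass(I)$ and a monomial witness $u_{\fp}\notin\fp$ with $u_{\fp}\mu\in I^n$. If $x_i\notin\fp$, then $x_iu_{\fp}$ is again a monomial outside $\fp$, and $(x_iu_{\fp})(\mu/x_i)=u_{\fp}\mu\in I^n$ serves as a witness for $\mu/x_i$ at $\fp$. If $x_i\in\fp$, then $u_{\fp}$ cannot involve $x_i$; writing $u_{\fp}\mu=m_{j_1}\cdots m_{j_n}\cdot w$ with each $m_{j_k}$ a generator of $I$ and $w$ a monomial, the divisibility $m_{j_k}\mid f$ gives $\sum_k e_i(m_{j_k})\leq ne_i<a$, so the $x_i$-exponent of $w$ is at least $a-ne_i\geq 1$, forcing $x_i\mid w$. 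Then $u_{\fp}(\mu/x_i)=m_{j_1}\cdots m_{j_n}(w/x_i)\in I^n$, giving the desired witness at $\fp$. In either case $\mu/x_i\in I^{(n)}$, contradicting minimality. Hence every minimal generator of $I^{(n)}$ divides $f^n$, as claimed.

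No serious obstacle is anticipated; the argument is elementary exponent-counting. The two points that require care are the reduction to monomial witnesses $u_{\fp}$ (which rests on the monomial nature of $I^n$ and of $\fp$) and the observation that each generator of $I$ divides $f$, which is precisely the defining property of $f$ as the lcm. It is worth noting that the resulting bound $n\deg(f)$ is not always sharp -- for instance, $I=(x,y)$ has $\deg(f)=2$ but $I^{(n)}=I^n$ is generated in degree $n$ -- but it is concrete and suffices to specify $E$ in Observation C.
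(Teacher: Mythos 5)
Your proof is correct, and it takes a genuinely different route from the paper. The paper obtains this bound as a byproduct of the Castelnuovo--Mumford regularity estimate $\reg(I^{(n)})\leq (\deg f)n-\Ht(I)+1$ from \cite{her3}, combined with the fact that a graded module is generated in degrees not exceeding its regularity; indeed the ``sharper bound'' of the statement is already contained in the displayed inequality $e_n\leq (\deg f)n$ in the proof of the preceding corollary. You instead prove the stronger combinatorial assertion that every minimal monomial generator of $I^{(n)}$ divides $f^n$, via exponent counting: if the $x_i$-exponent of a minimal generator $\mu$ exceeded $ne_i$, then $\mu/x_i$ would still lie in every $I^nA_{\fp}\cap A$, contradicting minimality. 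Your two supporting reductions are both sound --- the passage to monomial witnesses $u_{\fp}$ supported outside a monomial prime uses exactly that $I^n$ is monomial and that a polynomial outside $\fp$ has a monomial term outside $\fp$, and the inequality $\sum_k e_i(m_{j_k})\leq ne_i$ is precisely the defining property of $f$ as the lcm --- and the case split on $x_i\in\fp$ versus $x_i\notin\fp$ is handled correctly. What your argument buys is self-containedness (no appeal to the asymptotic regularity results) together with a structural statement about the generators themselves; it also extends, to arbitrary monomial ideals, the elementary argument the paper gives in its Remark only for square-free (radical) monomial ideals. What the paper's route buys is the extra information that the regularity of $I^{(n)}$, not merely its generating degree, is bounded by $(\deg f)n-\Ht(I)+1$, which is numerically slightly sharper and controls the entire resolution rather than only the degrees of generators.
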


We drive this sharp bound  when $I$ is monomial and radical not only by the above corollary, but also by an elementary method.
There are many examples  of ideals such as $I$ such that the corresponding  symbolic Rees algebra is not finitely generated
 but there is $D\in \mathbb{N}$ such that
$I^{(n)}$ is generated in degrees  $\leq Dn$ for all $n>0$. Indeed, Roberts constructed a prime ideal $\fp$
over $A_3$ such that the corresponding symbolic Rees algebra were  not be finitely generated. However, we showed
in Corollary 1.2 that there is $D$ such that $\fp^{(n)}$ is generated in degrees  $\leq Dn$ for all $n>0$.
These suggest  the following:

 \begin{problem}
 Let $I\lhd A$ be any ideal. There is $f\in \mathbb{Q}[X]$ such that
$I^{(n)}$ is generated in degrees  $\leq f(n)$ for all $n>0$. Is $f$ linear?
 \end{problem}

Section 2 deals with preliminaries. The reader may skip it, and come back to it as needed later. In Section 3 we present the proof of the observations. Section 4 is devoted to the proof of Example \ref{introduction}.
 We refer the reader to \cite{BH} for all unexplained definitions in the sequel.

\section{preliminaries}

 We give a quick review of the material that we need.
Let $R$ be any commutative  ring with an ideal $\frak a$ with a generating set
 $\underline{a}:=a_{1}, \ldots, a_{r}$.
 By $\HH_{\underline{a}}^{i}(M)$, we mean the $i$-th cohomology of the
\textit{$\check{C}ech$} complex of  a module $M$ with respect  to $\underline{a}$. This is independent of the choose
of the generating set. For simplicity, we denote it by $\HH_{\frak a}^{i}(M)$.
We   equip the polynomial  ring $A$  with the standard graded structure. Then, we can use the machinery of  graded
$\check{C}ech$ cohomology modules.
\begin{notation}
 Let  $L=\bigoplus_{n\in \mathbb{Z}} L_n$  be a  graded $A$-module and let $d\in \mathbb{Z}$.
\begin{enumerate}
\item[i)] The notation $L(d)$ is  referred to the $d$-th twist of $L$, i.e., shifting the grading $d$ steps.
\item[ii)] The notation $\End(L)$ stands for $\sup\{n:L_n\neq 0\}.$
\item[iii)]
The notation
 $\Beg(L)$ stands for $\inf\{n:L_n\neq 0\}.$\end{enumerate}
\end{notation}

\begin{discussion}
Denote the irrelevant ideal $\bigoplus_{n>0}A_n$ of $A$ by $\fm$.
\begin{enumerate}
\item[i)] We use the principals that $\sup\{\emptyset\}=-\infty$, $\inf\{\emptyset\}=+\infty$ and that $-\infty< +\infty$.
\item[ii)] Let $M$ be a graded $A$-module. Then $\HH_{\frak m}^{i}(M)$ equipped with a $\mathbb{Z}$-graded structure and
 $\End (\HH^i_{\fm}(M))<+\infty$.
\end{enumerate}
\end{discussion}

\begin{definition}
The \textit{Castelnuovo–-Mumford} regularity of $M$ is $$\reg(M):=\sup\{\End(\HH^i_{\fm}(M))+i:0\leq i \leq \dim M\}.$$
\end{definition}

The $\reg(M)$ computes the degrees of generators in the following sense.

\begin{fact}\label{f}
A graded module $M$ can be generated
by homogeneous elements of degrees not exceeding $\reg(M)$.
\end{fact}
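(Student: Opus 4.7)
The plan is to induct on $\dim M$, exploiting the fact that Castelnuovo--Mumford regularity does not increase modulo a generic linear form. I work under the implicit assumption that $M$ is finitely generated, which is the standard context in which $\reg(M)$ is a finite number.

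For the base case $\dim M = 0$, finite length forces $M = \HH^0_\fm(M)$, so $\reg(M) = \End(\HH^0_\fm(M)) = \End(M)$; any set of homogeneous generators then automatically lies in degrees $\leq \End(M) = \reg(M)$, since $M$ simply has no elements in higher degrees.

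The inductive step relies on a Mumford-style lemma: after enlarging $K$ to an infinite field (a faithfully flat operation that preserves both $\reg(M)$ and the degrees of the minimal generators), a sufficiently generic linear form $x \in A_1$ satisfies $\reg(M/xM) \leq \reg(M)$. To establish this, I would decompose the four-term sequence $0 \to (0 :_M x)(-1) \to M(-1) \xrightarrow{x} M \to M/xM \to 0$ into two short exact sequences, take the associated long exact sequences in $\HH^\bullet_\fm(-)$, and track the quantity $\End(\HH^i_\fm(-)) + i$ along the way. Genericity of $x$ ensures $(0 :_M x)$ has finite length, so it contributes only at $\HH^0_\fm$; the twist by $(-1)$ then combines with the $+1$ index shift from the connecting homomorphism to give the desired bound cleanly. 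Granting this lemma, by induction $M/xM$ is generated in degrees $\leq \reg(M/xM) \leq \reg(M)$; lifting these generators to homogeneous $\tilde m_1,\ldots,\tilde m_t \in M$ of the same degrees and invoking graded Nakayama (using that $\fm$ is the unique graded maximal ideal of $A$ and $M$ is finitely generated) yields $M = \langle \tilde m_1, \ldots, \tilde m_t\rangle$, closing the induction.

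The main obstacle is the Mumford lemma itself: one must confirm the existence of a sufficiently general linear form (which is where the field-extension step enters when $K$ is finite), verify via prime avoidance that $(0:_M x)$ can be arranged to have finite length, and carefully bookkeep indices through the two long exact sequences so that the degree shifts arising from the $(-1)$-twists and from the connecting homomorphisms assemble correctly into the single bound $\reg(M/xM) \leq \reg(M)$.
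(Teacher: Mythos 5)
The paper offers no proof of this Fact; it is simply invoked as a standard property of Castelnuovo--Mumford regularity, so your argument is being compared against the literature rather than against anything in the text. Your hyperplane-section induction is a correct and essentially complete strategy, and every ingredient you flag is genuinely needed: the base change to an infinite field so that prime avoidance in $A_1$ against the finitely many associated primes of $M$ other than $\fm$ produces a filter-regular linear form $x$; the splitting of the four-term sequence into two short exact sequences, which (since $(0:_M x)$ has finite length) gives $\HH^{i}_{\fm}(xM)\cong \HH^{i}_{\fm}(M)(-1)$ for $i\ge 1$ and hence, after the index bookkeeping you describe, $\reg(M/xM)\le\reg(M)$; the drop $\dim(M/xM)=\dim M-1$ (because $x$ avoids the minimal primes of the support) that drives the induction; and graded Nakayama to lift generators from $M/xM$. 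The caveats are the ones you already state: $M$ must be finitely generated, and the case $M=0$ is handled by the paper's convention $\sup\emptyset=-\infty$. For comparison, the quickest route usually cited is the equivalent description of regularity via the minimal graded free resolution, $\reg(M)=\max_i\{b_i(M)-i\}$ with $b_i(M)$ the top degree of a minimal $i$-th syzygy, from which the bound on generator degrees is the $i=0$ case; but that equivalence is itself typically proved by exactly the induction you outline, so your proposal is the self-contained version of the standard argument rather than a genuinely different one.
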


The following easy fact  translates  a problem from  symbolic powers to a problem
on $\check{C}ech$ cohomology modules.

\begin{fact}
Let $I\lhd A$  be a radical ideal of dimension one. Then $I^{(n)}/ I^n=\HH^0_{\fm}(A/I^n)$.
\end{fact}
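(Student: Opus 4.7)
The plan is to show $I^{(n)}=(I^n:_A\fm^\infty)$, so that quotienting by $I^n$ gives the $\fm$-torsion submodule of $A/I^n$, which is by definition $\HH^0_\fm(A/I^n)$. The key preliminary step is to control the associated primes of $A/I^n$. Since $I$ is radical, $\Ass(I)$ consists precisely of the minimal primes of $I$, all of dimension one. Passing to $I^n$ leaves the minimal primes unchanged but may introduce embedded primes, each of which must strictly contain some $\fp\in\Ass(I)$. Invoking the paper's standing homogeneity convention, $I^n$ is homogeneous, so every associated prime is homogeneous; but the only homogeneous prime of $A$ strictly containing a one-dimensional homogeneous prime is the irrelevant maximal ideal $\fm$. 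Hence $\Ass(A/I^n)\subseteq\Ass(I)\cup\{\fm\}$.

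Next I would fix a minimal primary decomposition $I^n=\bigl(\bigcap_{\fp\in\Ass(I)}Q_\fp\bigr)\cap Q_\fm$, with the last component omitted when $\fm\notin\Ass(A/I^n)$. For each $\fp\in\Ass(I)$, localizing at $\fp$ trivializes every primary component whose radical is not contained in $\fp$; pairwise incomparability of the minimal primes, together with $\fm\not\subseteq\fp$, yields $I^nA_\fp\cap A=Q_\fp$. Therefore by the definition of symbolic powers, $I^{(n)}=\bigcap_{\fp\in\Ass(I)}Q_\fp$.

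Finally I would evaluate the saturation $\fm$-componentwise. For each $\fp\in\Ass(I)$ pick $x\in\fm\setminus\fp$; any $r\in A$ with $\fm^kr\subseteq Q_\fp$ satisfies $x^kr\in Q_\fp$, and primary-ness together with $x\notin\sqrt{Q_\fp}=\fp$ forces $r\in Q_\fp$, so $(Q_\fp:\fm^\infty)=Q_\fp$. On the other hand $(Q_\fm:\fm^\infty)=A$ since $\fm^N\subseteq Q_\fm$ for some $N$. Combining,
$$(I^n:_A\fm^\infty)=\bigcap_i(Q_i:_A\fm^\infty)=\bigcap_{\fp\in\Ass(I)}Q_\fp=I^{(n)},$$
and quotienting by $I^n$ gives the fact. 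The only nontrivial ingredient is the first step—pinning down the embedded primes of $I^n$—which is precisely where the hypotheses of dimension one and homogeneity are used; everything after that is a routine primary-decomposition manipulation.
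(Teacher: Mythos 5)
Your proof is correct. The paper states this as an ``easy fact'' and offers no proof at all, so there is nothing to diverge from; your argument is the standard (and surely intended) one: identify $\HH^0_{\fm}(A/I^n)$ with $(I^n:_A\fm^\infty)/I^n$, show via primary decomposition that the only possible embedded prime of $I^n$ is $\fm$, and conclude that saturating with respect to $\fm$ strips exactly the $\fm$-primary component, leaving $I^{(n)}$. You are also right to flag that homogeneity, though omitted from the statement, is genuinely needed (otherwise $I^n$ could acquire embedded maximal primes other than $\fm$, and the two sides would differ), and that radicality is what guarantees $\Ass(A/I)=\operatorname{Min}(I)$ so that the paper's definition of $I^{(n)}$ ranges only over minimal primes. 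No gaps.
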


We will use the following  results:

\begin{lemma}\label{1}(See \cite{C})
Let $I\lhd A$ be a homogeneous ideal such that $\dim A/I\leq1$. Then
$\reg (I^n) \leq n\reg(I)$ for all $n$.
\end{lemma}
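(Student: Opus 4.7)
The plan is to proceed by induction on $n$, with the base case $n=1$ being tautological. For the inductive step I would exploit the short exact sequence of graded $A$-modules
\[
0 \to I^{n} \to I^{n-1} \to I^{n-1}/I^{n} \to 0,
\]
which, via the standard behaviour of Castelnuovo--Mumford regularity under short exact sequences, yields
\[
\reg(I^{n}) \leq \max\{\reg(I^{n-1}),\ \reg(I^{n-1}/I^{n})+1\}.
\]
By the inductive hypothesis $\reg(I^{n-1}) \leq (n-1)\reg(I) \leq n\reg(I)$, so the whole problem collapses to establishing the single bound $\reg(I^{n-1}/I^{n}) \leq n\reg(I) - 1$.

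The dimension hypothesis enters precisely here. Setting $R := A/I$ (so $\dim R \leq 1$) and $M_k := I^{k}/I^{k+1}$, the module $M_{n-1}$ is a graded $R$-module with $\reg_A(M_{n-1}) = \reg_R(M_{n-1})$, since the irrelevant ideals of $A$ and of $R$ act identically on $M_{n-1}$. Because $\dim M_{n-1} \leq 1$, its regularity is governed only by $\End H^0_\fm(M_{n-1})$ and $\End H^1_\fm(M_{n-1}) + 1$. I would then exploit the natural surjection of graded $R$-modules
\[
M_1 \otimes_R M_{k-1} \twoheadrightarrow M_k,
\]
coming from $I \otimes_A I^{k-1} \twoheadrightarrow I^k$, and run a secondary induction on $k$ that iteratively yields $\reg_R(M_k) \leq k\,\reg_R(M_1)$. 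Combined with the elementary comparison $\reg(I/I^2) \leq \reg(I)$ (obtainable from $0 \to I^2 \to I \to I/I^2 \to 0$ together with the case $n=2$ of the outer induction), this produces the desired estimate.

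I expect the tensor-product regularity step to be the main obstacle: over a general graded ring, $\reg(M \otimes N) \leq \reg(M) + \reg(N)$ fails dramatically, and pushing it through here genuinely requires $\dim R \leq 1$, which collapses the relevant local cohomology to just $H^0_\fm$ and $H^1_\fm$ and allows the finite-length part to be separated from the dimension-one piece. A technically cleaner alternative, which may well be the route taken in \cite{C}, is to cut by a generic linear form $\ell$: since $\dim A/(I,\ell) \leq 0$, the quotient is Artinian, and one can track $\reg(I^n)$ through the colon exact sequence
\[
0 \to (I^n \colon \ell)/I^n(-1) \to (A/I^n)(-1) \xrightarrow{\cdot\ell} A/I^n \to A/(I^n + (\ell)) \to 0,
\]
using $\dim A/I \leq 1$ to guarantee that $(I^n \colon \ell)/I^n$ has finite length with an inductively controlled top degree, whereupon the desired linear bound on $\reg(I^n)$ follows by standard regularity comparisons across the sequence.
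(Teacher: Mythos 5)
The paper does not prove this lemma at all --- it is quoted from Chandler \cite{C} --- so there is no internal proof to compare against; what follows is an assessment of your outline on its own terms. Your opening reduction is sound: the exact sequence $0\to I^{n}\to I^{n-1}\to I^{n-1}/I^{n}\to 0$ does give $\reg(I^{n})\leq\max\{\reg(I^{n-1}),\reg(I^{n-1}/I^{n})+1\}$, and $M_{n-1}=I^{n-1}/I^{n}$ is indeed an $A/I$-module of dimension $\leq 1$ whose regularity (defined via $\HH^{0}_{\fm}$ and $\HH^{1}_{\fm}$, as in Definition 2.3) is insensitive to whether you view it over $A$ or over $A/I$. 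But from that point on the argument is an itinerary rather than a proof, and two of its steps are genuinely broken. First, the deduction of $\reg(I/I^{2})\leq\reg(I)$ is circular and numerically wrong: from $0\to I^{2}\to I\to I/I^{2}\to 0$ the standard inequality only yields $\reg(I/I^{2})\leq\max\{\reg(I^{2})-1,\reg(I)\}$, so even granting the $n=2$ case (which you have not yet proved at that stage) you obtain $2\reg(I)-1$, not $\reg(I)$; feeding $2\reg(I)-1$ into your secondary induction gives $(n-1)(2\reg(I)-1)$, which exceeds the required $n\reg(I)-1$ already for $\reg(I)\geq 2$. Second, the tensor-product step is not merely ``the main obstacle'' --- it is the entire theorem. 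The inequality $\reg(M\otimes_{R}N)\leq\reg(M)+\reg(N)$ is false in general, $R=A/I$ is not regular, and even if it held you would still need $\reg(M_{k})\leq\reg(M_{1}\otimes_{R}M_{k-1})$, which fails for arbitrary surjections (regularity can jump up in quotients: $\reg(S/(x^{N}))=N-1$ while $\reg(S)=0$). Nothing in your write-up supplies the mechanism by which $\dim\leq 1$ rescues these steps.

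For the record, Chandler's actual route (and, independently, that of Geramita--Gimigliano--Pitteloud \cite{ger}) is to prove the multiplicative statement $\reg(IM)\leq\reg(I)+\reg(M)$ for a finitely generated graded module $M$ with $\dim(M/IM)\leq 1$, by comparing $IM$ with the image of $F\otimes M$ for a free presentation $F\to I$ with $F$ generated in degrees $\leq\reg(I)$ and exploiting that only $\HH^{0}_{\fm}$ and $\HH^{1}_{\fm}$ of the cokernels are nonzero; the lemma then follows at once by induction from $I^{n}=I\cdot I^{n-1}$. Your hyperplane-section alternative could in principle be made to work, but as written it defers the crux (bounding the top degree of $(I^{n}:\ell)/I^{n}$, i.e.\ essentially of $\HH^{0}_{\fm}(A/I^{n})$) to ``standard regularity comparisons,'' which is exactly the part that requires the argument. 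As it stands, the proposal should be regarded as incomplete.
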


Also, see \cite{ger}. Let us recall the following result from \cite{Cu} and  \cite{K}.
The regularity of
$\reg I^{(n)}$ is equal to $dn + e$ for all large enough $n$. Here $d$  is the smallest integer $n$ such that $$(x : x \in I,\textit{ and x is homogeneous of
degree at most n})$$  is a reduction of $I$, and $e$ depends only on $I$. In particular, $e$ is independent of $n$.
\begin{lemma}(See \cite[Corollary 7]{C})\label{c}
Let $I\lhd A$ be a homogeneous ideal with $\dim A/I = 2$. Then $\reg I^{(n)}\leq n\reg (I)$.
\end{lemma}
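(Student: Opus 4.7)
The strategy is to reduce the bound to the $\dim A/I=1$ case of Lemma~\ref{1} via a generic hyperplane section. After extending $K$ if necessary, pick a general linear form $\ell\in A_1$ avoiding every minimal prime of $I$. Since $\Ass(A/I^{(n)})$ consists of exactly the minimal primes of $I$ (by the construction of the symbolic power), $\ell$ is a non-zero-divisor on $A/I^{(n)}$ for every $n$, and $\HH^0_\fm(A/I^{(n)})=0$. Applying $\HH^\bullet_\fm(-)$ to the short exact sequence
$$0 \lo A/I^{(n)}(-1)\stackrel{\ell}{\lo} A/I^{(n)}\lo A/(I^{(n)}+(\ell))\lo 0$$
and using the vanishing gives the regular-reduction identity $\reg(A/I^{(n)})=\reg(A/(I^{(n)}+(\ell)))$. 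The same argument applied to $A/I$ in place of $A/I^{(n)}$ yields $\reg(\bar I)=\reg(I)$, where $\bar A:=A/(\ell)\cong A_{m-1}$ and $\bar I:=(I+(\ell))/(\ell)$; note $\dim\bar A/\bar I=1$.

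Set $J_n:=(I^{(n)}+(\ell))/(\ell)\subseteq\bar A$, so that $\reg(I^{(n)})=\reg(J_n)$. The heart of the argument is the identification $J_n=\bar I^{(n)}$ (the symbolic $n$-th power of $\bar I$ in $\bar A$) for sufficiently generic $\ell$; granting this, Lemma~\ref{1} in $\bar A$ gives $\reg(\bar I^n)\le n\reg(\bar I)=n\reg(I)$. Moreover, since $\dim\bar A/\bar I=1$, the quotient $\bar I^{(n)}/\bar I^n=\HH^0_{\bar\fm}(\bar A/\bar I^n)$ is of finite length, and the long exact sequence of $\HH^\bullet_{\bar\fm}(-)$ applied to $0\to\bar I^n\to\bar I^{(n)}\to\bar I^{(n)}/\bar I^n\to 0$ (using $\HH^0_{\bar\fm}(\bar I^n)=\HH^0_{\bar\fm}(\bar I^{(n)})=0$ as submodules of $\bar A$) forces $\HH^1_{\bar\fm}(\bar I^{(n)})$ to be a quotient of $\HH^1_{\bar\fm}(\bar I^n)$, with isomorphisms in higher degrees. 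Hence $\reg(\bar I^{(n)})\le\reg(\bar I^n)$, and we conclude $\reg(I^{(n)})=\reg(\bar I^{(n)})\le n\reg(I)$.

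The main obstacle is the identification $J_n=\bar I^{(n)}$, which amounts to saying that symbolic powers commute with a generic hyperplane section. Concretely, for each $f\in I^{(n)}$ and each minimal prime $\bar\fp$ of $\bar I$ (arising from a minimal prime $\fp$ of $I$ via $\fp\mapsto(\fp+(\ell))/(\ell)$), one must exhibit $s\in A$ with $\bar s\notin\bar\fp$ and $sf\in I^n$. The obstruction is that a witness $s\in(I^n:f)$ coming from the membership $f\in I^{(n)}$ might satisfy $s\in\fp+(\ell)$, i.e.\ $\bar s\in\bar\fp$; generically this is excluded by choosing $\ell$ outside a proper subvariety determined by the colon ideals $(I^n:f)$ and the primary components of $I^n$---a standard genericity argument when $K$ is infinite. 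A careful primary-decomposition bookkeeping, together with the fact that for generic $\ell$ the minimal primes of $I$ descend bijectively to those of $\bar I$ in $\bar A$, closes this gap.
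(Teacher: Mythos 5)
The paper offers no proof of this lemma at all---it is quoted from Chandler \cite[Corollary 7]{C}---so your argument can only be judged on its own terms. Your overall strategy (a generic hyperplane section reducing to the $\dim A/I\le 1$ case of Lemma~\ref{1}) is the right one, but the step you yourself flag as ``the main obstacle,'' namely the identification $J_n=(I^{(n)}+(\ell))/(\ell)=\bar I^{(n)}$, is a genuine gap that your sketch does not close. First, the asserted bijection between minimal primes of $I$ and of $\bar I$ is false: since $\dim A/I=2$, each top-dimensional minimal prime $\fp$ defines a projective \emph{curve}, and a general hyperplane meets it in $\deg(\fp)$ points, so a single minimal prime of $I$ typically yields several minimal primes of $\bar I$. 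Second, the witness argument does not come for free: knowing $(I^n:f)\not\subseteq\fp$ does not give $(I^n:f)+(\ell)\not\subseteq\fq$ for the primes $\fq$ minimal over $\fp+(\ell)$, and one fixed $\ell$ must work simultaneously for the infinitely many $f\in I^{(n)}$, so ``choosing $\ell$ outside a proper subvariety determined by the colon ideals $(I^n:f)$'' is not a finite genericity condition as stated. Commutation of symbolic powers with generic hyperplane sections is not a standard fact, and proving it is essentially as hard as the inequality you are after.

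Fortunately the identification is unnecessary, and your proof can be repaired within your own framework. The module $C_n:=I^{(n)}/I^n$ is supported on $V(I)\setminus\operatorname{Min}(I)$, hence $\dim C_n\le 1$; choosing $\ell$ to avoid also the one-dimensional primes of $\Supp(C_n)$ (none of which is $\fm$), the module $J_n/\bar I^n$, being a homomorphic image of $C_n/\ell C_n$, has finite length. The exact sequence $0\to J_n/\bar I^n\to\bar A/\bar I^n\to\bar A/J_n\to 0$ then gives $\HH^i_{\bar{\fm}}(\bar A/J_n)\simeq\HH^i_{\bar{\fm}}(\bar A/\bar I^n)$ for $i\ge 1$ and exhibits $\HH^0_{\bar{\fm}}(\bar A/J_n)$ as a quotient of $\HH^0_{\bar{\fm}}(\bar A/\bar I^n)$, whence $\reg(\bar A/J_n)\le\reg(\bar A/\bar I^n)$. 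Combining this with $\reg(A/I^{(n)})=\reg(\bar A/J_n)$ (your nonzerodivisor step) and with Lemma~\ref{1} applied in $\bar A$, one gets
$$\reg(I^{(n)})=\reg(\bar A/J_n)+1\le\reg(\bar I^{\,n})\le n\reg(\bar I)\le n\reg(I),$$
bypassing $\bar I^{(n)}$ entirely. With this replacement (and the caveat that the argument implicitly assumes $\fm\notin\Ass(A/I^{(n)})$, which holds when the symbolic power is taken with respect to minimal primes), your proof becomes correct.
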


The above result of Chandler generalized in the following sense:

\begin{lemma}(See \cite[Corollary 2.4]{her3})
Let $I\lhd A$ be a homogeneous ideal with $\dim A/I \leq 2$.    Denotes the maximum degree of the generators of $I$  by $d(I)$.
There is a constant $e$ such that for all $n > 0$ we have $\reg I^{(n)}\leq nd(I)+e$.
\end{lemma}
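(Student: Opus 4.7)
The strategy combines the asymptotic linear growth recalled just before the statement (due to Cutkosky and Kodiyalam) with the elementary observation that the linear slope in that formula is automatically $\leq d(I)$, together with an absorption of the finitely many small exponents into the additive constant.

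Step 1. Under the hypothesis $\dim A/I \leq 2$, I would invoke the Cutkosky-Kodiyalam type statement recalled above: there exist an integer threshold $N_0$ and integers $d,e'$ such that
\[
\reg I^{(n)} = dn + e' \quad \text{for all } n \geq N_0,
\]
where $d$ is the smallest integer $k$ such that the subideal of $I$ generated by its homogeneous elements of degree $\leq k$ is a reduction of $I$.

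Step 2. I would then observe $d \leq d(I)$. This is immediate from the definition of $d(I)$: the ideal $I$ is already generated by homogeneous elements of degree $\leq d(I)$, so the candidate subideal in the description of $d$, specialized at $k=d(I)$, equals $I$ itself, which is trivially a reduction of itself. Hence the asymptotic formula yields $\reg I^{(n)} \leq n\,d(I) + e'$ for every $n \geq N_0$.

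Step 3. To cover the finitely many indices $1 \leq n < N_0$, each $\reg I^{(n)}$ is a fixed integer, so setting
\[
e := \max\Bigl(e',\ \max_{1 \leq n < N_0}\bigl(\reg I^{(n)} - n\,d(I)\bigr)\Bigr)
\]
produces the uniform bound $\reg I^{(n)} \leq n\,d(I) + e$ valid for all $n > 0$.

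The main conceptual obstacle is Step 1 itself. The symbolic Rees algebra need not be Noetherian under the sole assumption $\dim A/I \leq 2$ (the Roberts example in $A_3$ discussed in Section 1 already illustrates this), so one cannot extract the linear growth from Rees-algebra Noetherianity in the usual way. The hypothesis $\dim A/I \leq 2$ enters at exactly this point, through the vanishing $\HH^{i}_{\fm}(A/I^{(n)}) = 0$ for $i \geq 3$, which collapses the cohomological complexity and drives the argument of \cite[Corollary 2.4]{her3} (compare also Chandler's Lemma \ref{c}). Once Step 1 is granted, Steps 2 and 3 are essentially bookkeeping.
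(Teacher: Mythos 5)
The statement you are proving is quoted in the paper from \cite[Corollary 2.4]{her3} without proof, so there is no in-paper argument to measure you against; judged on its own terms, your proposal has a fatal gap at Step 1. The Cutkosky--Herzog--Trung/Kodiyalam theorem you invoke --- regularity equal to $dn+e'$ for all $n\gg 0$, with $d$ the least degree of a homogeneous reduction --- is a theorem about the \emph{ordinary} powers $I^n$, not the symbolic powers $I^{(n)}$. (The paper's recollection just before the lemma indeed writes $\reg I^{(n)}$, but this is a slip: the sources \cite{Cu} and \cite{K} treat $I^n$, and the paper itself later uses the result in the form $\reg(I^n)=a(I)n+b(I)$.) For symbolic powers no such exact eventual linearity is available under the sole hypothesis $\dim A/I\leq 2$: the symbolic Rees algebra need not be Noetherian, and Cutkosky has produced ideals for which $\lim_n \reg(I^{(n)})/n$ is irrational, which rules out any identity $\reg I^{(n)}=dn+e'$ with integers $d,e'$. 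So Step 1 is not merely unjustified; the statement you rest on is false in the generality you need. Steps 2 and 3 (the bound $d\leq d(I)$ and the absorption of finitely many small $n$ into the constant) are correct bookkeeping, but they have nothing to stand on.

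Your closing paragraph rightly senses that the hypothesis $\dim A/I\leq 2$ must enter through the vanishing of $\HH^i_{\fm}(A/I^{(n)})$ for $i\geq 3$, but it then defers the actual work to ``the argument of \cite[Corollary 2.4]{her3}'' --- which is precisely the statement being proved, so the proposal is circular. A correct route (the one taken in \cite{her3}) starts from the linear \emph{upper bound} for ordinary powers, $\reg I^n\leq n\,d(I)+e_0$, which is a genuine theorem, and transfers it to $I^{(n)}$ by controlling the remaining local cohomology modules $\HH^i_{\fm}(A/I^{(n)})$ for $i\leq 2$ via the comparison of $I^{(n)}$ with $I^n$; only a linear bound, never an exact linear formula, comes out for the symbolic powers.
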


\section{proof of the observations}

If symbolic powers and the ordinary powers are the same, then Huneke's bound is tight.
We start by presenting some non-trivial examples to show that the desired bound is very tight. Historically, these
examples are important.

\begin{example}(This has a role in \cite[Page 1801]{her3})
Let $R=\mathbb{Q}[x,y,z,t]$ and let $I:=(xz,xt^2,y^2z)$. Then $I^{(2)}$ is generated in degrees $\leq6=2\times3$.
Also, $\Beg(I^{(2)})=2\Beg(I)$.
\end{example}

\begin{proof}
The primary decomposition of $I$ is given by $$I=(x,y^2)\cap(z,t^2)\cap(x,z).$$
By definition $$I^{(2)}=(x,y^2)^2\cap(z,t^2)^2\cap(x,z)^2=(x^2 z^2 , x^2 z t^2, xy^2 z^2 , x ^2t^4, x y^2 zt^2 , y^4 z^2 ).$$
Thus, $I^{(2)}$ is generated in degrees $\leq6$. Clearly, $\Beg(I^{(2)})=4=2\times\Beg(I)$.
\end{proof}

\begin{example} \label{terai}
Let $R=\mathbb{Q}[a,b,c,d,e,f]$ and let $I:=(a b c,a b f,a c e,a d e,a d f,
     b c d,b d e,b e f,c d f,c e f)$. Then $I^{(2)}$ is generated in degrees $\leq6=2\times3$. Also, $\Beg(I^{(2)})=5<6=2\times\Beg(I)$.
\end{example}

 Sturmfels showed that $\reg _1(I^2)=7>6=2\reg _1(I).$ Also, see Discussion  \ref{lin1}.

\begin{proof} This deduces from Corollary 1.4. Let us prove it by hand.
The method is similar to Example 3.1. We left to reader to check that $I^{(2)}$ is generated by the following degree 5 elements
$$\{b c d e f, a c d e f, a b d e f, a b c e f, a b c d f, a b c d e\},$$plus to the following degree 6 elements
\[\begin{array}{ll}
&\{c^2 e^2 f^2 , b c e^2 f^2 ,
 b^2 e^2 f^2 , c^2 d e f^2 , a  b^2 e f^2 , c^2 d^2 f^2,\\
&a c d^2 f^2 , a^2 d^2 f^2 , a ^2b d f^2 , a^2 b^2 f^2 , b^2 d e^2 f, a c^2 e^2 f,\\
&a^2 d^2 e f, b c^2 d^2 f, a^2 b^2 c f, b^2 d^2 e^2 , a b d^2 e^2 ,a^2 d^2 e^2 ,\\
& a^2 cde^2 , a^2 c^2 e^2 , b^2 cd^2 e, a^2 bc^2  e,
         b ^2c^2 d^2 , a b^2 c^2 d, a ^2b^2 c^2\}.
\end{array}\]Thus
$I^{(2)}$ is generated in degrees $\leq6=2\times3$. Clearly,  $\Beg(I^{(2)})=5<6=2\times\Beg(I)$.
\end{proof}

\begin{proposition}\label{p1}
Let $I\lhd A$ be a homogeneous ideal such that $\dim  A/I\leq1$. Then
$I^{(n)}$ generated in degrees  $\leq \max\{n\reg(I)-1,nD\}$ for
all $n$.
\end{proposition}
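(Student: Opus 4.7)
The plan is to exploit the short exact sequence
\[
0 \lo I^n \lo I^{(n)} \lo I^{(n)}/I^n \lo 0,
\]
so that a generating set of $I^{(n)}$ can be assembled from a generating set of the submodule $I^n$ together with homogeneous lifts of a generating set of the quotient $I^{(n)}/I^n$. The degrees of such generators are bounded by the maximum of the degrees needed for $I^n$ and for the quotient, so it suffices to bound each of these separately.

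For $I^n$, the hypothesis that $I$ is generated in degrees $\leq D$ immediately gives that $I^n$ is generated by $n$-fold products of generators of $I$, hence in degrees $\leq nD$.

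The crucial step is to bound the generating degrees of $I^{(n)}/I^n$. Since $I^n$ is a homogeneous ideal with $\dim A/I^n = \dim A/I \leq 1$, any homogeneous embedded associated prime of $I^n$ satisfies $\dim A/\fp = 0$ and hence coincides with the irrelevant ideal $\fm$. Inspecting the homogeneous primary decomposition of $I^n$, one sees that $I^{(n)}$ differs from $I^n$ at most in an $\fm$-primary component; consequently $I^{(n)}/I^n$ is $\fm$-torsion and embeds into $H^0_{\fm}(A/I^n)$. Since a graded $\fm$-torsion module is generated in degrees $\leq$ its top non-vanishing degree, we obtain
\[
\End\bigl(I^{(n)}/I^n\bigr) \leq \End\bigl(H^0_{\fm}(A/I^n)\bigr) \leq \reg(A/I^n) = \reg(I^n) - 1 \leq n\reg(I) - 1,
\]
where the middle equality is standard (from the long exact sequence of local cohomology associated with $0\to I^n\to A\to A/I^n\to 0$, noting $\reg(A)=0$) and the last inequality is Lemma \ref{1}.

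Combining the two bounds yields a homogeneous generating set of $I^{(n)}$ in degrees $\leq \max\{nD,\ n\reg(I)-1\}$, as desired. The main technical obstacle is the passage from the radical setting to the stated generality: the Fact identifying $I^{(n)}/I^n$ with $H^0_{\fm}(A/I^n)$ is recorded only for radical ideals of dimension one, so one has to argue directly with the homogeneous primary decomposition of $I^n$ in order to show that $I^{(n)}/I^n$ is $\fm$-torsion under the weaker hypothesis $\dim A/I\leq 1$.
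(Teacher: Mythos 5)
Your proof is correct and follows essentially the same route as the paper: build generators of $I^{(n)}$ from generators of $I^n$ (degrees $\leq nD$) plus lifts of generators of $I^{(n)}/I^n$, and bound the latter by $\End(\HH^0_{\fm}(A/I^n)) \leq \reg(I^n)-1 \leq n\reg(I)-1$ using Chandler's lemma. Your extra care in showing that $I^{(n)}/I^n$ is $\fm$-torsion without assuming $I$ radical is a genuine (and welcome) patch, since the paper invokes its Fact, stated only for radical one-dimensional ideals, in this more general setting.
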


\begin{proof} Let $D$ be such that $I$ is generated in degree $\leq D$. Recall that $D\leq \reg(I)$.
We note that $I^n$ generated in degree $\leq Dn$. As
$\dim(A/I)=1$ and in view of Fact \ref{f}, one has $I^{(n)}/ I^n=\HH^0_{\fm}(A/I^n)$. Now look at the exact sequence
$$0\lo I^n \stackrel{\rho}\hookrightarrow I^{(n)}\stackrel{\pi}\lo I^{(n)}/ I^n \lo 0.$$
Suppose $\{f_1,\ldots,f_r\}$ is a homogeneous system of generators for $I^n$. Also,
suppose $\{\overline{g}_1,\ldots,\overline{g}_s\}$ is a homogeneous system of generators for $I^{(n)}/ I^n$,
where $g_j\in I^{(n)}$ defined by $\pi(g_j)=\overline{g}_j$. Hence, $\deg(g_j)=\deg(\overline{g}_j)$.

Let us search for a generating set for $I^{(n)}$.  To this end, let $x\in I^{(n)}$.  Hence $\pi(x)=\sum_j s_j \overline{g}_j$ for some $s_j\in A$. Thus
 $x-\sum_j s_jg_j\in\Ker(\pi)=\im(\rho)=I^n$. This says that $x-\sum_j s_j g_j=\sum_i r_if_i$ for some $r_i\in A$. Therefore,
$\{f_i,g_j\}$ is a homogeneous generating set for
$I^{(n)}$.

Recall that $\deg(f_i)\leq nD$. Fixed $n$, and let $1\leq j \leq s$. Its enough  to show that $$\deg(g_j)\leq n\reg(I)-1.$$
Keep in mind that $m\geq3$. One has $\depth(A_{\fm})=m> 2$. By
\cite[Proposition 1.5.15(e)]{BH},
$\grade(\fm, A)\geq 2.$
Look at $$0\lo I^{   n} \lo A \lo A/I^{n}\lo 0.$$
This induces the following exact sequence:
$$0\simeq \HH^0_{\fm}( A)\lo \HH^0_{\fm}( A/I^n)\lo \HH^1_{\fm}(   I^n) \lo \HH^1_{\fm}( A)\simeq0.$$Thus, $\HH^0_{\fm}( A/I^n)\simeq \HH^1_{\fm}(   I^n)$.
In view of Lemma \ref{1},
$$\End(\HH^1_{\fm}(   I^n))+1\leq\reg(I^n)\leq \reg(I)n.$$Therefore,
$\End(I^{(n)}/ I^n)< \reg(I)n$. By Fact \ref{f}, $$\deg(g_j)=\deg(\overline{g}_j)\leq\End(I^{(n)}/ I^n)<\reg(I)n,$$
as claimed.
\end{proof}

\begin{definition}\label{lin}
The ideal $ I$ has a linear
resolution if its minimal generators all have the same degree and the nonzero
entries of the matrices of the minimal free resolution of $I$ all have degree one.
\end{definition}

\begin{discussion}\label{lin1}
In general powers of ideals
with linear resolution need not to have linear resolutions. The first example of such an ideal
was given by Terai, see \cite{her4}. It may be worth to note that his example
were used in Example \ref{terai} for a different propose.
\end{discussion}
 However, we have:

\begin{corollary}\label{cor}
Let $I\lhd A$ be an ideal with a linear resolution, generated in degrees $\leq D$ such that $\dim A/I\leq1$. Then
$I^{(n)}$ generated in degrees  $\leq Dn$ for
all $n$.
\end{corollary}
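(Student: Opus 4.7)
The plan is to reduce the statement directly to Proposition \ref{p1}. The key observation is that an ideal with a linear resolution has Castelnuovo--Mumford regularity equal to the common degree of its minimal generators: by Definition \ref{lin}, all minimal generators of $I$ share one common degree $d \leq D$, and the matrices of the minimal free resolution have linear entries, so the resolution is $d$-linear. Reading off the graded Betti numbers (or equivalently computing directly from the definition of regularity via local cohomology) yields $\reg(I) = d \leq D$.

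With this in hand, Proposition \ref{p1} applies verbatim because $\dim A/I \leq 1$, and it delivers the bound
$$\max\{n\reg(I) - 1,\; nD\} \leq \max\{nD - 1,\; nD\} = nD$$
on the degrees of a homogeneous generating set for $I^{(n)}$. This is exactly the desired inequality, so the proof concludes.

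There is essentially no obstacle; the content of the corollary is entirely packaged inside Proposition \ref{p1}, and the only thing to verify is the elementary identity $\reg(I) \leq D$ under the linear resolution hypothesis. If one wished to be completely self-contained, one could also note that since $I$ is generated in a single degree $d$, the bound $nD$ in Proposition \ref{p1} could be replaced by $nd$, giving the (slightly) sharper conclusion that $I^{(n)}$ is generated in degrees $\leq \max\{nd - 1, nd\} = nd \leq nD$; this refinement is not needed for the statement as written.
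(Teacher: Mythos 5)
Your proposal is correct and follows the same route as the paper: identify $\reg(I)$ with the common generating degree via the linear resolution hypothesis and then invoke Proposition \ref{p1}. In fact you are slightly more careful than the paper, which asserts $\reg(I)=D$ outright, whereas your version $\reg(I)=d\leq D$ handles the case where the common generating degree is strictly below the stated bound $D$.
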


\begin{proof}
It follows from Definition \ref{lin} that $\reg(I)=D$. Now, Proposition \ref{p1} yields the claim.
\end{proof}

\begin{theorem}\label{tl}
Let $I\lhd A_4$ be an (radical) ideal generated in degrees $\leq D$. There is an
integer $E$ such that
$I^{(n)}$ is generated in degrees  $\leq En$. Suppose in addition that $I$ is of linear type. Then $I^{(n)}$ is generated in degrees  $\leq Dn$.
\end{theorem}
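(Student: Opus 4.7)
The plan is to treat the two assertions of Theorem~\ref{tl} by a case analysis on $\dim A/I$, which lies in $\{0,1,2,3\}$ since $\dim A_4 = 4$. For the first assertion, the height-stratified regularity results already collected in Section~2 will yield a linear bound $En$ in every case. For the second assertion, I will reduce to proving $I^n = I^{(n)}$ and then appeal to the trivial fact that $I^n$ is generated in degrees $\leq Dn$.

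For the existence of $E$, one proceeds case by case. When $\dim A/I = 0$, radicality forces $I = \fm$ and $I^{(n)} = \fm^n$ is generated in degree $n$. When $\dim A/I = 3$, one has $\Ht I = 1$; since $A_4$ is a UFD the radical ideal $I$ is a finite intersection of principal height-one primes, which by coprimality and the lcm formula collapses to a single principal ideal $(f)$ with $\deg f \leq D$, so that $I^{(n)} = I^n = (f^n)$ is generated in degree $\leq nD$. When $\dim A/I = 2$, the last cited lemma (\cite[Corollary 2.4]{her3}) gives $\reg I^{(n)} \leq n\, d(I) + e \leq nD + e$, and Fact~\ref{f} bounds the generator degrees by $(D+e)n$ for every $n \geq 1$. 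Finally, when $\dim A/I = 1$, Proposition~\ref{p1} bounds the generator degrees by $\max\{n\reg(I)-1,\, nD\}$, which is again linear in $n$. Taking $E$ to be the maximum of these linear coefficients completes part~(1).

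For the linear-type strengthening, the strategy is to prove that $I^n = I^{(n)}$ for every $n$; once this is in hand, the bound $\leq Dn$ is immediate because $I^n$ is generated by $n$-fold products of the degree-$\leq D$ generators of $I$. The equality $I^n = I^{(n)}$ is equivalent, via the defining formula for symbolic powers, to the inclusion $\Ass(A/I^n) \subseteq \operatorname{Min}(I)$. For $I$ of linear type one has $\mu(I_{\fp}) \leq \Ht \fp$ at every prime $\fp \supseteq I$; at a minimal prime $\fp$ of $I$ this forces $I_{\fp}$ to be generated by an $A_{\fp}$-regular sequence, so $I_{\fp}^n$ is $\fp A_{\fp}$-primary and unmixed. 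A standard ascent argument, using Cohen--Macaulayness of powers of complete intersections together with the Artin--Rees lemma on the relevant localisations, then rules out embedded components of $I^n$ and yields the desired equality.

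The main obstacle will be the associated-prime step just sketched: the definition of linear type directly governs only the symmetric-to-Rees map and not the primary decomposition of higher powers. I expect to invoke a theorem in the spirit of Huneke's work on $d$-sequences, or of Herzog--Simis--Vasconcelos, which guarantees that $\Ass(A/I^n) = \Ass(A/I)$ for all $n$ when $I$ is of linear type. Should a direct citation prove unavailable, a fallback is to stratify the linear-type case again by $\Ht I \in \{1,2,3,4\}$ in $A_4$ and invoke unmixedness of powers of regular sequences directly in each sub-case, at the cost of duplicating the case analysis performed in part~(1).
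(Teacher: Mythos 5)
Your handling of the first assertion is essentially the paper's own argument: stratify by $\Ht(I)$, dispose of height one by factoriality of $A_4$ and of the remaining cases by the regularity bounds of Chandler and Herzog--Hoa--Trung together with Fact \ref{f}. That part is fine.

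The second assertion is where the proposal has a genuine gap. You reduce the bound $Dn$ to the equality $I^{(n)}=I^n$, i.e.\ to the claim that a radical ideal of linear type is normally torsion-free ($\Ass(A/I^n)\subseteq\operatorname{Min}(I)$ for all $n$). The theorem you hope to cite does not exist, and the claim is false. Take $\fp\subset K[x,y,z]$ to be the defining prime of the monomial curve $(t^3,t^4,t^5)$, the ideal of $2\times 2$ minors of $\left(\begin{smallmatrix} x&y&z\\ y&z&x^2\end{smallmatrix}\right)$: this is a perfect grade-two prime with $\mu(\fp)=3$, it satisfies $G_\infty$ for trivial dimension reasons, hence is generated by a $d$-sequence and is of linear type by the Huneke/Avramov--Herzog theorem. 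Yet $\fp^{(n)}\neq\fp^n$ for some $n$ (already for $n=2$ in this case); indeed, if $\fp^{(n)}=\fp^n$ held for all $n$, then every $A/\fp^n$ would be one-dimensional with no embedded primes, hence Cohen--Macaulay, and Cowsik--Nori would force $\fp$ to be a complete intersection, contradicting $\mu(\fp)=3>2$. Adjoining a variable is flat, so $\fp A_4$ is a prime of linear type in $A_4$ with $\fp^2 A_4\subsetneq \fp^{(2)}A_4$. (This example is only weighted-homogeneous, but that is beside the point: the purely algebraic implication you need is already false.) The portion of your argument that does go through --- $\mu(I_\fq)\le\Ht\fq$ at minimal primes $\fq$, so $I_\fq$ is a complete intersection and $I^n_\fq$ is $\fq A_\fq$-primary --- only re-proves what holds for every ideal by the very definition of $I^{(n)}$, namely that symbolic and ordinary powers agree after localizing at minimal primes. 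All of the content lies in excluding embedded primes of $I^n$, and that is exactly what fails; there is no "standard ascent argument" to fall back on, and your height-stratified fallback only applies when $I$ is globally a complete intersection.

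For comparison, the paper does not attempt $I^{(n)}=I^n$ at all: in the only nontrivial case $\Ht(I)=2$ it chains Fact \ref{f} with Chandler's bound $\reg I^{(n)}\le n\reg(I)$ (Lemma \ref{c}) and then uses $\reg(I)=D$. Note, however, that the identity $\reg(I)=D$ is what a \emph{linear resolution} provides, not linear type, so if you pursue the regularity route you should scrutinize that final step rather than import it uncritically.
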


\begin{proof}
We note that $I^n$ generated in degree $\leq Dn$.
Suppose first that $\Ht(I)=1$. Then $I=(x)$ is principal, because height-one radical ideals over unique factorization domains are principal.
In this case $I^{(n)}=(x^n)$, because it is a complete intersection.\footnote{In a  paper
by Rees \cite{R}, there is a height-one prime ideal (over a normal domain) such that non of its symbolic powers is principal.} In particular, $I^{(n)}$ generated in degrees  $\leq Dn$.
The case $\Ht(I)=3$ follows by Corollary \ref{cor}. Then without loss of the generality we may assume that
$\Ht(I)=2$. Suppose  $I^{(n)}$ generated in degrees  $\leq D_n$. Then
$$D_n\stackrel{\ref{f}}\leq \reg (I^{(n)})\stackrel{\ref{c}}\leq n\reg (I)\stackrel{\ref{lin}}=nD.$$
The proof  in the linear-type case is complete.
\end{proof}

\begin{theorem}
Let $I\lhd A_3$ be a homogeneous radical ideal, generated in degrees $\leq D$ and of dimension $1$. Then
$I^{(n)}$ generated in degrees  $< (D+1)n$ for all $n\gg0$.
\end{theorem}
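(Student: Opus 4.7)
The plan is to follow the strategy of Proposition \ref{p1}, but to replace its use of Lemma \ref{1} (the absolute bound $\reg(I^n) \leq n\reg(I)$) by the Cutkosky--Kodiyalam asymptotic linearity formula for $\reg(I^n)$ recalled in the discussion preceding Lemma \ref{c}. The gain comes from the fact that the asymptotic slope of $\reg(I^n)$ is controlled by the generator degree $D$, whereas $\reg(I)$ itself can a priori exceed $D$, so Proposition \ref{p1} alone is too weak to give the desired bound $<(D+1)n$.

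Concretely, let $D_n$ denote the maximum degree of a minimal generator of $I^{(n)}$. Repeating the generator decomposition in the proof of Proposition \ref{p1}---lifting a homogeneous generating set of the quotient $I^{(n)}/I^n$ and combining it with a homogeneous generating set of $I^n$ of degrees $\leq Dn$---yields
$$D_n \leq \max\{Dn,\;\End(I^{(n)}/I^n)\}.$$
Because $I$ is radical with $\dim A_3/I=1$, one has the identification $I^{(n)}/I^n = \HH^0_{\fm}(A_3/I^n)$. Since $\depth A_3 = 3 > 1$, the long exact sequence of local cohomology associated with $0 \to I^n \to A_3 \to A_3/I^n \to 0$ gives $\HH^0_{\fm}(A_3/I^n) \cong \HH^1_{\fm}(I^n)$, hence
$$\End(I^{(n)}/I^n) = \End \HH^1_{\fm}(I^n) \leq \reg(I^n) - 1.$$

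The new input is the Cutkosky--Kodiyalam formula: for $n \gg 0$,
$$\reg(I^n) = dn + e,$$
where $d$ is the least integer such that the subideal of $I$ generated by its degree-$\leq d$ elements is a reduction of $I$, and $e$ depends only on $I$. Since $I$ is already generated in degrees $\leq D$, the degree-$\leq D$ elements of $I$ span $I$ itself and hence trivially form a reduction; thus $d \leq D$ and $\reg(I^n) \leq Dn + e$ for all $n \gg 0$. Combining the two estimates, for $n \gg 0$,
$$D_n \leq \max\{Dn,\;Dn + e - 1\} \leq Dn + e,$$
and for $n > e$ the right-hand side is strictly less than $(D+1)n$, which is the claim.

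The only step that is not entirely routine is the inequality $d \leq D$ on the reduction number appearing in the Cutkosky--Kodiyalam formula; this, however, is tautological from the definition of $d$, and it is precisely here that the hypothesis on the generator degrees enters. Everything else---the local cohomology identification via the radical one-dimensional hypothesis, the depth computation in $A_3$, and the generator-lifting trick---is already recorded either in Fact 2.6 or in the proof of Proposition \ref{p1}.
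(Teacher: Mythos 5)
Your proposal is correct and follows essentially the same route as the paper: reduce to bounding $\End(\HH^1_{\fm}(I^n))$ via the generator-lifting decomposition from Proposition \ref{p1}, then invoke the Kodiyalam/Cutkosky--Herzog--Trung asymptotic formula $\reg(I^n)=dn+e$ with slope $d\leq D$ to get the bound $<(D+1)n$ for $n\gg 0$. The paper cites $a(I)\leq D$ as well known, whereas you justify it by noting that $I$ is trivially a reduction of itself; this is the only (cosmetic) difference.
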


\begin{proof} Keep the proof of Theorem  \ref{tl} in mind. Then, we may assume $\dim (A_3/I)=1$. By the proof of Proposition \ref{p1},
we need to show  $\HH^1_{\fm}(   I^n)$  generated in degrees  $<(D+1)n$ for all $n\gg0$. By \cite{K},  $\reg(I^n)=a(I)n+b(I)$ for all $n\gg0$. This is well-known that $a(I)\leq D$, see \cite{K}.
For all $n>b(I)$ sufficiently large,
$$\End(\HH^1_{\fm}(   I^n))+1\leq\reg(I^n)=a(I)n+b(I)\leq Dn+b(I)\leq(D+1)n,$$as claimed.
\end{proof}

\begin{corollary}
Let $I\lhd A_3$ be any radical ideal. There is $D\in \mathbb{N}$ such that
$I^{(n)}$ is generated in degrees  $\leq Dn$ for all $n>0$.
\end{corollary}

\begin{proof}
Suppose $I$ is generated in degrees $\leq E$ for some $E$.
Let $n_0$ be such that $I^{(n)}$ generated in degrees  $< (E+1)n$ for all $n>n_0$, see the above theorem.
Let $\ell_i$ be such that $I^{(n)}$ generated in degrees  $< \ell_i$. Now, set $e_i:=\lfloor\frac{\ell_i}{i}\rfloor+1$.
Then $I^{(n)}$ is generated in degrees  $< e_in$ for all $n$. Let $D:=\sup\{E,e_i:1\leq i\leq n_0\}$. Clearly,
$D$ is finite and that $I^{(n)}$ is generated in degrees  $\leq Dn$ for all $n>0$.
\end{proof}

\begin{corollary}
Let $I\lhd A_3$  be a homogeneous radical ideal. Then $I^{(n)}$ and $I^n$ have the same reflexive-hull.
\end{corollary}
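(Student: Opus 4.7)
The plan is to translate the assertion into a codimension-one condition. For any ideal $J$ in the regular (hence normal, Krull) domain $A_3$, the reflexive hull $J^{**}$ coincides with the divisorial closure $\bigcap_{\Ht(\fp)=1}JA_{\fp}\cap A$; equivalently, two ideals $J_1\subseteq J_2$ have the same reflexive hull precisely when $(J_2/J_1)_{\fp}=0$ for every height-one prime $\fp$. Applied to $I^{n}\subseteq I^{(n)}$, the claim reduces to showing that the support of $I^{(n)}/I^{n}$ contains no height-one prime.

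I would then carry out a short case analysis on $\Ht(I)\in\{1,2,3\}$. If $\Ht(I)=1$, then since $A_3$ is a UFD the height-one radical ideal $I$ is principal, $I=(f)$; moreover $(f^{n})$ has no embedded primes, as $A/(f^{n})$ is a Cohen--Macaulay hypersurface of dimension two, so $I^{(n)}=(f^{n})=I^{n}$. If $\Ht(I)=3$, then $\fm$ is the only homogeneous prime of that height, whence $I=\fm$ and $\fm^{(n)}=\fm^{n}$. In both extremes $I^{n}$ and $I^{(n)}$ actually coincide, so there is nothing to prove.

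The substantive case is $\Ht(I)=2$, where $\dim A/I=1$. Here I would invoke the preliminary fact $I^{(n)}/I^{n}\cong \HH^{0}_{\fm}(A/I^{n})$. As an $\fm$-torsion module, its support lies inside $\{\fm\}$, a prime of height three, so in particular no height-one prime belongs to it; in fact $(I^{(n)}/I^{n})_{\fp}=0$ for every prime $\fp$ of height at most two. The codimension-one criterion from the first paragraph then yields the equality of reflexive hulls.

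The one delicate point is fixing the convention for ``reflexive hull''; I would take it to be the divisorial closure, which for ideals in the Krull domain $A_3$ agrees with the double dual $\Hom(\Hom(I,A),A)$ by standard theory, so that the ``agreement at all height-one primes'' criterion applies rigorously. Beyond that, the argument is purely formal once the $\dim A/I=1$ fact on local cohomology is in hand.
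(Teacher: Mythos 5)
Your proof is correct, but it takes a genuinely different route from the paper's. The paper reduces at once to $\Ht(I)=2$ and argues homologically: since $I^{(n)}/I^n=\HH^0_{\fm}(A/I^n)$ has finite length and $\depth(A)=3$, one gets $\Ext^i_A(I^{(n)}/I^n,A)=0$ for $i<3$, so dualizing $0\to I^n\to I^{(n)}\to I^{(n)}/I^n\to 0$ already yields $(I^{(n)})^{\ast}\simeq(I^{n})^{\ast}$, hence equal double duals. You instead use the Krull-domain description of the reflexive hull as the divisorial closure $\bigcap_{\Ht(\fp)=1}JA_{\fp}\cap A$ and verify agreement at all height-one primes. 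Both arguments pivot on the same fact that $I^{(n)}/I^{n}$ is $\fm$-torsion in the substantive case; the paper's version buys slightly more (the single duals already coincide), while yours avoids $\Ext$ entirely and is closer to first principles. One caveat in your case analysis: a radical ideal of height one in a UFD need not be principal when it has minimal primes of mixed heights (e.g.\ $(x)\cap(y,z)=(xy,xz)$ in $A_3$) --- a gap that the paper's own ``without loss of generality'' shares --- but your first-paragraph criterion repairs this for free: for any radical $I$ and any height-one prime $\fp$ one has $I^{(n)}A_{\fp}=I^{n}A_{\fp}$, since either $\fp$ is minimal over $I$ (where symbolic and ordinary powers agree locally by definition) or $I\not\subseteq\fp$ (where both localize to $A_{\fp}$). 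So the localization criterion alone finishes the proof uniformly, with no case division needed.
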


\begin{proof}
Set $A:=A_3$. Without loss of the generality, we may assume that $\Ht(I)=2$. Set $(-)^\ast:=\Hom_A(-,A)$.
We need to show $(I^{(n)})^{\ast\ast}\simeq (I^{n})^{\ast\ast}$. As,  $I^{(n)}/ I^n=\HH^0_{\fm}(A/I^n)$ is of finite length,
$\Ext^i_A(I^{(n)}/ I^n,A)=0$ for all $i<3$, because $\depth(A)=3$. Now, we apply $(-)^\ast$ to the following exact sequence
$$0\lo I^n \lo I^{(n)}\lo I^{(n)}/ I^n \lo 0,$$to observe  $(I^{(n)})^{\ast}\simeq (I^{n})^{\ast}$. From this we get the claim.
\end{proof}

To prove Observation C we need:

\begin{fact}\label{fg} (See \cite[Theorem 3.2]{her2})
Let $I$ be a monomial ideal  in a polynomial ring over a field.
Then the corresponding symbolic Rees
algebra is finitely generated.
\end{fact}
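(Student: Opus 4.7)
The plan is to realize the symbolic Rees algebra $\mathcal{R}_s(I):=\bigoplus_{n\geq 0} I^{(n)} t^n\subseteq A[t]$ as the semigroup algebra of the lattice points of a rational polyhedral cone, and then invoke Gordan's lemma to deduce finite generation. First, I would use that a monomial ideal $I$ admits an irredundant monomial primary decomposition $I=\bigcap_i Q_i$ in which every associated prime $P_i\in\Ass(I)$ has the form $P_i=(x_j:j\in F_i)$ and every $P_i$-primary component $Q_i$ is itself a monomial ideal. Consequently $I^n$ is a monomial ideal with monomial associated primes drawn from $\Ass(I)$.

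Second, I would check that for each monomial prime $P$, the contraction $I^n A_P\cap A$ is a monomial ideal whose membership is tested by finitely many homogeneous linear inequalities on exponent vectors whose right-hand sides scale linearly in $n$. In the squarefree case this is transparent: $I^{(n)}=\bigcap_i P_i^n$, and $x^a\in P_i^n$ if and only if $\sum_{j\in F_i}a_j\geq n$. In general, I would unpack the irreducible monomial decomposition of $I^n$ and read off, component by component, the linear inequalities defining the $P$-primary part. Intersecting over $P\in\Ass(I)$ shows that
$$S:=\{(a,n)\in\mathbb{Z}^m_{\geq 0}\times\mathbb{Z}_{\geq 0}:x^a\in I^{(n)}\}$$
is exactly $C\cap\mathbb{Z}^{m+1}$ for a rational polyhedral cone $C\subseteq\mathbb{R}^{m+1}_{\geq 0}$.

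Third, by Gordan's lemma the monoid $S$ is finitely generated. Since $\mathcal{R}_s(I)$ is spanned over $K$ by the monomials $x^at^n$ with $(a,n)\in S$, and multiplication in $\mathcal{R}_s(I)$ corresponds to addition in $S$, any finite monoid generating set $(a^{(1)},n^{(1)}),\ldots,(a^{(N)},n^{(N)})$ of $S$ yields a finite $K$-algebra generating set $x^{a^{(1)}}t^{n^{(1)}},\ldots,x^{a^{(N)}}t^{n^{(N)}}$ of $\mathcal{R}_s(I)$, which is the claim.

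The main obstacle is the middle step in the non-squarefree case. For an embedded prime $P$ of $I^n$, one must verify that the $P$-primary component is cut out by inequalities whose constants are honestly linear in $n$, rather than by some subtler arithmetic growth. This requires genuine bookkeeping with the irreducible decomposition of $I^n$ and an understanding of how it varies with $n$; once this is done, the polyhedral geometry and Gordan's lemma handle the rest essentially formally.
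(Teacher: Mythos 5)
The paper offers no proof of this Fact at all; it is quoted verbatim from Herzog--Hibi--Trung \cite{her2}, so your argument has to be judged on its own terms. Steps one and three are fine, but the pivotal claim in step two --- that $S=\{(a,n):x^a\in I^{(n)}\}$ equals $C\cap\mathbb{Z}^{m+1}$ for a rational polyhedral cone $C$ --- is false for non-squarefree monomial ideals, and the failure is not a matter of bookkeeping. Take $I=(x^2,y^2)$ in $K[x,y]$ (or extend it by further variables): its only associated prime is $(x,y)$, so every contraction $I^nA_P\cap A$ is just $I^n$ and $I^{(n)}=I^n$. Then $(2,2,2)\in S$ because $x^2y^2\in(x^2,y^2)^2$, while $(1,1,1)\notin S$ because $xy\notin(x^2,y^2)$; since $(1,1,1)=\tfrac{1}{2}(2,2,2)$ lies in any cone containing $(2,2,2)$ and is a lattice point, $S$ cannot be of the form $C\cap\mathbb{Z}^{3}$. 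The structural reason is that for a monomial prime $P$ the contraction $I^nA_P\cap A$ equals $J_P^n$, the $n$-th \emph{ordinary} power of the monomial ideal $J_P$ obtained from $I$ by setting the variables outside $P$ equal to $1$; membership in ordinary powers is cut out by linear inequalities in $(a,n)$ only when those powers are integrally closed, which already fails for $(x^2,y^2)$. So the exponent monoid $S$ is in general a non-saturated affine semigroup, and Gordan's lemma is precisely the tool that does not apply to it.

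Your argument is correct, and is essentially the standard one, in the squarefree case: there $I^{(n)}=\bigcap_i P_i^n$, a monomial $x^a$ lies in $P_i^n$ iff $\sum_{j\in F_i}a_j\geq n$, and $S$ really is the lattice-point monoid of a rational cone (the vertex cover algebra picture), so Gordan's lemma finishes the proof. For arbitrary monomial ideals what must be shown is that a finite intersection of (generally non-saturated) Rees-type monoids of monomial ideals is again finitely generated; that is exactly the content of \cite[Theorem 3.2]{her2} and it does not follow from convexity alone. Either restrict the statement to squarefree (equivalently, radical) monomial ideals and keep your proof, or replace step two by the actual finiteness argument of Herzog--Hibi--Trung.
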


Here, we present the proof of Observation C:

\begin{proposition}
Let $I$ be any ideal such that the corresponding symbolic Rees algebra is finitely generated (e.g. $I$ is monomial). There is $D\in \mathbb{N}$ such that
$I^{(n)}$ is generated in degrees  $\leq Dn$ for all $n>0$.
\end{proposition}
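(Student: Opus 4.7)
The plan is to exploit the bigraded structure of the symbolic Rees algebra $\mathcal{R}_s(I):=\bigoplus_{n\geq 0}I^{(n)}t^n$ sitting inside $A[t]$. Since we are in the graded setting, $I$ is homogeneous, hence so is every $I^{(n)}$, and thus $\mathcal{R}_s(I)$ carries a natural bigrading: one coming from the inclusion $I^{(n)}\subset A$, and one from the powers of $t$.

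First I would choose a finite \emph{bihomogeneous} generating set of $\mathcal{R}_s(I)$ as an $A$-algebra. By hypothesis some finite generating set exists, and by replacing each generator with its bihomogeneous components one may assume the generators have the form $f_1 t^{n_1},\ldots,f_k t^{n_k}$ with each $f_i\in I^{(n_i)}$ homogeneous of $A$-degree $d_i$ and each $n_i\geq 1$. Next I would define $D:=\max\{\lceil d_i/n_i\rceil:1\leq i\leq k\}\in\mathbb{N}$.

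The heart of the argument is then the observation that, for any fixed $n\geq 1$, the ideal $I^{(n)}$ is generated as an $A$-module (and hence as an ideal) by the finitely many monomials $f_1^{a_1}\cdots f_k^{a_k}$ such that $a_1 n_1+\cdots+a_k n_k=n$; this is immediate from the meaning of ``generates'' for the bigraded algebra $\mathcal{R}_s(I)$ restricted to $t$-degree $n$. Each such monomial is homogeneous of $A$-degree $\sum a_i d_i\leq D\sum a_i n_i = Dn$, so $I^{(n)}$ is generated in degrees $\leq Dn$, which is the desired conclusion.

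There is no substantial obstacle; the only piece of bookkeeping is the reduction to a bihomogeneous generating set, which is automatic once one takes bihomogeneous components of any finite generating set. In the monomial case the finite-generation hypothesis on $\mathcal{R}_s(I)$ is supplied by Fact \ref{fg}, so the statement then holds unconditionally, and the procedure yields an explicit choice of $D$, namely the maximum of the ratios $d_i/n_i$ rounded up.
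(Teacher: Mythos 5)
Your proof is correct and follows essentially the same route as the paper: both arguments use finite generation of the symbolic Rees algebra to write each $I^{(n)}$ as a sum of products of symbolic powers (or of fixed algebra generators) of bounded $t$-degree, and then bound the $A$-degree of such a product by $Dn$ with $D$ the maximum of the rounded-up ratios $\lceil d_i/n_i\rceil$. Your version is phrased at the level of individual bihomogeneous algebra generators rather than whole components $I^{(i)}$ for $i\leq\ell$, but this is only a cosmetic difference.
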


\begin{proof}(Suppose $I$ is monomial.
Then $\mathcal{R}:=\bigoplus_{i\geq 0}I^{(i)}$ is finitely generated, see Fact \ref{fg}.)
The finiteness of $\mathcal{R}$ gives an integer $\ell$ such that $\mathcal{R}=R[I^{(i)}:i\leq \ell]$. Let  $e_i$ be such that $I^{(i)}$ is generated in degree less or equal than  $e_i$  for all $i\in \mathbb{N}$. Set $d_i:=\lfloor\frac{e_i}{i}\rfloor+1$ for all $i\leq \ell$. The notation  $D$ stands for $\max\{d_i:\textit{for all }i\leq \ell\}$. We note that $D$ is finite.  Let $n$ be any integer. Then $$I^{(n)}=\sum I^{(j_1)}\ldots I^{(j_{i_n})},\ \  where \ \ j_1+\ldots +j_{i_n}=n \ \ and  \ \ 1\leq j_i\leq\ell\quad for\ \ all \ \ 1\leq i\leq n.$$
This implies that

 \[\begin{array}{ll}
e_n&\leq e_{j_1}+\ldots+e_{j_{i_n}}\\
&\leq j_1d_{j_1}+\ldots+j_{i_n}d_{j_{i_n}}\\
&\leq j_1D+\ldots+j_{i_n}D\\&=D(j_1+\ldots+j_{i_n})\\
&=Dn,
\end{array}\]as claimed.
\end{proof}

\begin{corollary}
Let $I=(f_1,\ldots,f_t)$ be a monomial ideal. Set $E:=\deg f_1+\ldots+\deg f_t$. Then
  $I^{(n)}$ is generated in degrees  $\leq En$ for all $n>0$.
\end{corollary}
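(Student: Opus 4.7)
The plan is to reduce this corollary to the sharp bound established earlier in the introduction: for a monomial ideal $I$ with minimal monomial generators $f_1,\ldots,f_t$, writing $f:=\operatorname{lcm}(f_1,\ldots,f_t)$, one has that $I^{(n)}$ is generated in degrees $\leq \deg(f)\,n$ for all $n>0$. That sharp bound itself rests on Fact \ref{fg} together with the preceding proposition; the only task remaining is a one-line estimate on the degree of the lcm of the generators.

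Concretely, I would argue as follows. For each variable $x_i$, the exponent of $x_i$ in the monomial $f=\operatorname{lcm}(f_1,\ldots,f_t)$ is $\max_j \deg_{x_i}(f_j)$, which is certainly bounded above by $\sum_j \deg_{x_i}(f_j)$. Summing over all variables gives
\[
\deg(f)\;=\;\sum_i \max_j \deg_{x_i}(f_j)\;\leq\;\sum_i\sum_j \deg_{x_i}(f_j)\;=\;\sum_j \deg(f_j)\;=\;E.
\]
Combining with the lcm bound produces $I^{(n)}$ generated in degrees $\leq \deg(f)\,n\leq En$ for every $n>0$, which is exactly the corollary.

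There is essentially no obstacle here: all of the substantive work has already been absorbed into the earlier corollary and its underlying ingredients (namely Fact \ref{fg} and the mechanism of the preceding proposition). One might also try to bypass the lcm corollary and work directly from the preceding proposition by tracking the numbers $d_i=\lfloor e_i/i\rfloor+1$; but that route would require showing $e_i\leq iE$ for every $i$ up to the generating degree of the symbolic Rees algebra, and this is in substance the same lcm inequality applied to the generators of the symbolic Rees algebra. Moreover, the naive application of the proposition's formula typically costs an unnecessary additive constant (as one already sees for $I=(x^a)$, where $e_n=an=En$ gives $d_n=E+1$), so the lcm route is strictly cleaner for obtaining the bound $En$ with no off-by-one loss.
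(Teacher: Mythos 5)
Your proof is correct and takes essentially the same route as the paper's: both reduce to the bound $\deg(f)\,n$ for the least common multiple $f$ of the generators and then observe $\deg f\le\sum_i\deg f_i=E$ (the paper phrases this as $\deg f\le\deg(\prod_i f_i)$). One small correction of provenance: the $\deg(f)\,n$ bound does not come from Fact \ref{fg} and the symbolic Rees algebra proposition (that route only yields \emph{some} constant, as you yourself note), but from the Herzog--Hoa--Trung regularity estimate $\reg(I^{(n)})\le(\deg f)n-\Ht(I)+1$ together with Fact \ref{f}.
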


\begin{proof}  We may assume $I\neq 0$. Thus, $\Ht(I)\geq 1$.
 Let $f$ be the least common
multiple of the generating monomials of $I$. In view of \cite[Thorem 2.9]{her3}
and for all $n > 0$,
$$\reg(I^{(n)})\leq (\deg f)n - \Ht( I) + 1 \quad(\ast)$$ The notation $e_n$ stands for the maximal degree of the number of generators of $I^{(n)}$. Due to Fact \ref{f} we have $e_n\leq\reg(I^{(n)})$. Putting this along with $(\ast)$ we observe that $$e_n\leq\reg(I^{(n)})\leq(\deg f)n - \Ht( I) + 1\leq(\deg f)n $$ for all $n>0$. It is clear that  $\deg f\leq \deg(\prod_i f_i)=\sum_i\deg f_i=D.$  \end{proof}

One may like to deal with the following sharper bound:

\begin{corollary}\label{hhh}
Let $I$ be a monomial ideal and let $f$ be the least common
multiple of the  generating monomials of $I$. Then
  $I^{(n)}$ is generated in degrees  $\leq \deg(f)n$ for all $n>0$.
\end{corollary}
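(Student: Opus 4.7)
The plan is to argue exactly as in the proof of the preceding corollary, but to stop at the intermediate estimate $e_n\le(\deg f)n$ rather than weakening it to $e_n\le\bigl(\sum_i\deg f_i\bigr)n$. In other words, the sharper bound claimed here is already contained in the previous proof; all we have to do is read it off before the final weakening $\deg f\le \deg(\prod_i f_i)$ is invoked.

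Concretely, I would proceed in three steps. First, dispose of the trivial case $I=0$, and otherwise note that $I$ has at least one nonzero monomial generator, so $\Ht(I)\geq 1$. Second, apply \cite[Theorem~2.9]{her3} to the monomial ideal $I$ with $f$ the least common multiple of its monomial generators; this gives
\[
\reg\bigl(I^{(n)}\bigr)\;\leq\;(\deg f)\,n-\Ht(I)+1\;\leq\;(\deg f)\,n
\]
for every $n>0$, where the last inequality uses $\Ht(I)\geq 1$. Third, invoke Fact~\ref{f}: if $e_n$ denotes the maximum degree among a homogeneous generating system of $I^{(n)}$, then $e_n\leq \reg(I^{(n)})$, so $e_n\leq(\deg f)n$, which is the assertion.

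There is essentially no obstacle in this approach, since the heavy lifting is done by the regularity bound of \cite{her3}; the novelty over the preceding corollary lies only in reporting $\deg f$ in place of $\sum_i\deg f_i$. For the squarefree (radical monomial) case, an independent elementary route is possible, as alluded to in the introduction: one uses the primary decomposition $I=\bigcap_i\mathfrak{p}_i$ with each $\mathfrak{p}_i$ generated by a subset of the variables, the identity $I^{(n)}=\bigcap_i\mathfrak{p}_i^{\,n}$, and the observation that any minimal generator of this intersection is supported on $\mathrm{supp}(f)$ and decomposes into $n$ contributions coming from monomials of degree at most $\deg f$. However, this elementary argument is not needed here, since the one-line deduction from \cite[Theorem~2.9]{her3} and Fact~\ref{f} already covers the full monomial case.
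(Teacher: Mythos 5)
Your proposal is correct and coincides with the paper's (implicit) argument: the preceding corollary's proof already establishes $e_n\le\reg(I^{(n)})\le(\deg f)n-\Ht(I)+1\le(\deg f)n$ via \cite[Theorem 2.9]{her3} and Fact~\ref{f}, and only weakens to $\sum_i\deg f_i$ in its last line, which is exactly why the paper states Corollary~\ref{hhh} without a separate proof. Reading off the intermediate estimate, as you do, is all that is needed.
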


The following is an immediate corollary of Corollary \ref{hhh}. Let us prove it
without any use of advanced technics  such as the  Castelnuovo-Mumford regularity.

\begin{remark}
Let $I$ be a monomial radical ideal generated in degrees $\leq D$. Then $I^{(n)}$ is generated in degrees  $\leq Dn$.
Indeed, first we recall a routine fact. By $[\textbf{u}, \textbf{v}]$ we mean the least common multiple of the monomials $\textbf{u}$ and $\textbf{v}$. Denote the generating set of a monomial ideal $K$ by $G(K)$.
Also, if $K=(\textbf{u}: \textbf{u}\in G(K))$ and $L=(\textbf{v}: \textbf{v}\in G(L))$ are monomial, then $$K\cap L=\langle[\textbf{u}, \textbf{v}]: \textbf{u}\in G(K),\textit{ and }\textbf{v}\in G(L) \rangle\quad(\star)$$ Now we prove the desired claim.
Let $I$ be a radical monomial ideal generated in degrees $\leq D$. The primary decomposition of $I$ is of the form\[\begin{array}{ll}
I&=(X_{i_1},\ldots,X_{i_k})\cap\ldots\cap(X_{j_1},\ldots,X_{j_l})\\
&:=\fp_1\cap\ldots\cap\fp_{\ell}
\end{array}\]
Set $$\sum:=\{X_i\in\fp_i\setminus \bigcup_{j\neq i}\fp_j\textit{ for some i}\}.$$ In view of $(\star)$, we see that $|\sum|=D$. By definition, $$I^{(n)}=(X_{i_1},\ldots,X_{i_k})^n\cap\ldots\cap(X_{j_1},\ldots,X_{j_l})^n\quad(\ast)$$Recall
that $$(X_{i_1},\ldots,X_{i_k})^n=(X_{i_1}^{m_1}\cdots X_{i_k}^{m_k}:\textit{where }m_1+\cdots +m_k=n)\quad(\ast,\ast)$$
Combining $(\star)$ along with $(\ast,\ast)$ and $(\ast)$ we observe that any monomial generator of $I^{(n)}$ is of degree less or equal than $Dn$. \end{remark}

\section{Proof of  Example \ref{introduction}}

We start by a computation from Macaualy2.

\begin{lemma}\label{hop}Let
$A:=\mathbb{Q}[x,y,z,t,a,b]$ and let $M:=\left(x(x-y)ya,(x-y)ztb,yz(xa-tb)\right)$. Set $f:=xy(x-y)ztab(ya-tb)$.
The following  holds:
\begin{enumerate}
\item[i)]  $(M^2:_Af)=(x,y,z)$,
\item[ii)] $M$ is  a radial ideal and all the associated primes of $M$ have height $2$. In fact $$\Ass(M)=\{(b, a), (b, x), (y, x), (z, x), (t, x), (b, y),  (z,
y), (t, y),  (a, t),  (a, z),  (z, x - y),  (x - y, ya - tb)\}$$
\end{enumerate}
\end{lemma}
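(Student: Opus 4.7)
The plan is to verify both parts by direct computer algebra, as the opening line ``We start by a computation from Macaulay2'' signals. Write $g_1 := x(x-y)ya$, $g_2 := (x-y)ztb$ and $g_3 := yz(xa-tb)$, so $M = (g_1,g_2,g_3)$ and $M^2$ is generated by the six products $g_ig_j$ of degree $8$, while $f$ has degree $9$.

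For (ii), I would feed $M$ to Macaulay2's primary-decomposition routine. The claim is that this returns exactly the twelve ideals listed, each of which is visibly prime of height $2$. This in one stroke produces $\Ass(M)$, gives every associated prime the same height, and establishes radicality, because a primary decomposition whose components are themselves prime exhibits $M$ as the intersection of those primes. A human verification is feasible in one direction: each of the listed primes $\fp_i$ visibly contains every generator $g_j$ by direct substitution, so $M \subseteq \bigcap_i \fp_i$; the reverse containment $\bigcap_i \fp_i \subseteq M$ is a Gr\"obner basis equality.

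For (i), with (ii) in hand, the ideal quotient $(M^2 :_A f)$ is a single Macaulay2 command, and the content of the lemma is the assertion that the output equals $(x,y,z)$. A corroborating hand proof splits into two inclusions. The direction $(x,y,z) \subseteq (M^2 :_A f)$ requires producing, for each $u \in \{x,y,z\}$, a representation
\[
u f \;=\; \sum_{1 \le i \le j \le 3} p_{ij}^{(u)}\, g_i g_j
\]
with the $p_{ij}^{(u)}$ homogeneous of degree $2$; these can be read off Macaulay2's division operator. For the reverse direction $(M^2 :_A f) \subseteq (x,y,z)$, one decomposes $(M^2 :_A f) = \bigcap_{\fq}(\fq :_A f)$ over the primary components $\fq$ of $M^2$: components with $f \in \fq$ contribute $A$, components whose radical does not contain $f$ contribute $\fq$ itself, and the residual intersection should simplify to $(x,y,z)$.

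The principal obstacle is purely computational. The syzygies expressing $xf$, $yf$, $zf$ as elements of $M^2$ amount to solving a large linear system over $\mathbb{Q}$ for the degree-$2$ coefficients $p_{ij}^{(u)}$, while unravelling the primary decomposition of $M^2$ is delicate since $M^2$ is typically not radical and may carry embedded primes beyond the twelve primes in $\Ass(M)$. Both tasks are routine for a CAS but impractical on paper, which is why the author treats the lemma as a trusted Macaulay2 output from the outset.
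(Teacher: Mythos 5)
Your proposal matches the paper's proof, which is nothing more than a Macaulay2 transcript computing the ideal quotient $(M^2:_A f)$ and the associated primes of $M$, followed by the remark that radicality is then easy to see. The extra hand-verification strategies you sketch go beyond what the paper does, but the core approach — trusting the CAS output for both items — is identical.
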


\begin{proof}
i1 : R=QQ[x,y,z,t,a,b]\\
o1 = R\\
o1 : PolynomialRing\\
i2 : M=ideal$(x\ast(x-y)\ast y\ast a,(x-y)\ast z\ast t\ast b,y\ast z\ast(x\ast a-t\ast b))$\\
o2 = ideal $(x(x-y)ya,(x-y)ztb,yz(xa-tb))$\\
o2 : Ideal of R\\
i3  : Q = quotient$(J\ast J,x\ast y\ast(x-y)\ast z\ast t\ast a\ast b\ast(y\ast a-t\ast b))$\\
o3 = ideal (z, y, x)\\
o3 : Ideal of R\\
i4 : associatedPrimes M\\
o4 = $\{ideal (b, a), ideal (b, x), ideal (y, x), ideal (z, x), ideal (t, x), ideal (b, y), ideal (z,\\
y), ideal (t, y), ideal (a, t), ideal (a, z), ideal (z, x - y), ideal (x - y, y\ast a - t\ast b)\}$\\
o4 : List

It is easy to see that $M$ is radical. These prove the items i) and ii).
\end{proof}

\begin{lemma}
Adopt the above notation. Then $M^{(2)}= M^{2}+(f)$.
\end{lemma}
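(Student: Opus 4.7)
The plan is to prove $M^{(2)} = M^{2}+(f)$ by verifying the two inclusions separately, with the first being a direct consequence of the preceding lemma and the second requiring a computer-algebra verification. First I would handle $M^{2}+(f) \subseteq M^{(2)}$. The inclusion $M^{2}\subseteq M^{(2)}$ is automatic. To place $f$ in $M^{(2)}$, recall that $M$ is radical by Lemma \ref{hop}(ii), so $\Ass(M)$ coincides with the set of minimal primes of $M$, and by definition $f \in M^{(2)}$ iff $(M^{2}:_{A} f) \not\subseteq \fp$ for every $\fp \in \Ass(M)$. Lemma \ref{hop}(i) gives $(M^{2}:_{A} f)=(x,y,z)$, so it suffices to check that no prime on the $12$-element list contains all three of $x,y,z$. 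This is a short inspection: most of the listed primes are generated by two variables and visibly omit one of $x,y,z$; for $(z,x-y)$ one notes that $x\equiv y\not\equiv 0$ modulo the prime; and for $(x-y,ya-tb)$ a degree count in $\mathbb{Q}[y,z,t,a,b]/(ya-tb)$ shows that $x\equiv y$ is nonzero there. Hence $f \in M^{(2)}$.

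For the reverse inclusion, my plan is to reduce to a finite computation. Each $\fp \in \Ass(M)$ is a height-$2$ ideal generated by two elements in the regular ring $A$, hence a complete intersection; consequently $\fp^{(2)}=\fp^{2}$. Since $M$ is radical, $M = \bigcap_{\fp \in \Ass(M)}\fp$, and one obtains
\[
M^{(2)} \;=\; \bigcap_{\fp \in \Ass(M)}\fp^{(2)} \;=\; \bigcap_{\fp \in \Ass(M)} \fp^{2},
\]
an intersection of twelve explicit complete-intersection ideals of low degree. I would then verify $\bigcap_{\fp}\fp^{2} \subseteq M^{2}+(f)$ by a Macaulay2 calculation in the same spirit as Lemma \ref{hop}. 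Equivalently, part (i) identifies the cyclic submodule $A\cdot \bar f \subseteq A/M^{2}$ with $A/(x,y,z)$, and one checks that this cyclic module exhausts the submodule $M^{(2)}/M^{2}$.

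The main obstacle is precisely this reverse inclusion. The standing hypotheses pin down $\bar f$ and its annihilator, but promoting this to the equality $M^{(2)}/M^{2} = A \cdot \bar f$ requires genuine information on the embedded primes of $M^{2}$ (equivalently, a direct computation of the twelvefold intersection $\bigcap_{\fp}\fp^{2}$). A hand proof would demand a case analysis of those elements of $\bigcap_{\fp}\fp^{2}$ of degree $\geq 9$ that are not already in $M^{2}$, and does not seem feasible without computer assistance, which is why I would invoke Macaulay2 to close the argument.
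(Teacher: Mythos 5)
Your proposal is correct and follows essentially the same route as the paper: both reduce $M^{(2)}$ to the twelvefold intersection $\bigcap_i \fp_i^2$ over the associated primes from Lemma \ref{hop} and delegate the decisive inclusion to a Macaulay2 computation. You in fact supply two details the paper's sketch elides --- that each $\fp_i$ is a two-generated height-two complete intersection, so $\fp_i^{(2)}=\fp_i^2$, and an independent check via $(M^2:_A f)=(x,y,z)$ that $f\in M^{(2)}$ --- which only strengthens the argument.
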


\textbf{Sketch of Proof.} Denote the set of all associated prime ideals of $M$ by $\{\fp_i:1\leq i\leq 12\}$ as listed in Lemma \ref{hop}. Revisiting Lemma \ref{hop} we see that
$M$ is radical.  Thus
$M=\bigcap_{1\leq i\leq 12}\fp_i$. By definition, $$M^{(2)}=\bigcap_{1\leq i\leq 12}\fp_i^2$$
 For Simplicity,
we relabel $A:=\fp_2^2$, $B:=\fp_2^2$ and so on. Finally, we relabel $L:=\fp_{12}^2$. In order to compute this intersection we use Macaulay2.\\
i5 : $Y=intersect(A,B,C,D,E,F,G,H,I,X,K,L)$\\
o5 : Ideal of R\\
i6 : $N=ideal(x\ast y\ast (x-y)\ast z\ast t\ast a\ast b\ast (y\ast a-t\ast b))$\\
o7 : Ideal of R\\
i8 : $M\ast M+N==Y$\\
o8 = true\\

The  output term ``\emph{true}'' means that the claim  ``$M^{(2)}= M^{2}+(f)$'' is true.
 \ \ \ \ \ \ \  \   \  \ \ \ \ \ \ \  \   \ \ \ \ \ \  \ \  \   \ \ \ \ \ \ \ \  \   \ \ \ \ \ \ $\Box$

Now, we are ready to present:

\begin{example}Let
$A:=\mathbb{Q}[x,y,z,t,a,b]$ and  $J:=(x(x-y)ya,(x-y)ztb,yz(xa-tb))$. Then $J$ generated by degree-four elements and $J^{(2)}$ has a minimal generator of degree $9$.
\end{example}

\begin{proof}
Let $f$ be as of Lemma \ref{hop}. In the light of Lemma \ref{hop},  $(J^2:_Af)=(x,y,z)$. Thus, $f\notin J^2$. This means that  $f$ is a minimal generator of $J^{(2)}$. Since $\deg(f)=9$ we get the claim.
\end{proof}

A somewhat simpler example (in dimension 7) is:

\begin{example} Let $A:=\mathbb{Q}[x,y,z,a,b,c,d]$ and
$I:=(x y a b,x z cd,y z(a b-c d))$. Then $I$ is radical, binomial, Cohen-Macaulay of height $2$. Clearly, $I$ is generated in degree $4$. But $I^{(2)}$  has a minimal generator of degree $9$.
\end{example}

\begin{proof}Let $f=x yz a bc d(a c-b d)$. By using Macaulay2, we have $(I^2:_Af)=(x,y,z).$
The same computation shows that $I$ is radical, binomial, Cohen-Macaulay of height $2$. Clearly, $I$ is generated in degree $4$. But $I^{(2)}=I^2+(f)$, and $f$ is a minimal generator of $I^{(2)}$ of degree $9$.
\end{proof}

\begin{acknowledgement}
I thank Hop D. Nguyen for finding a  gap, for fill in the gap, and for several simplifications. We check some of the examples by the help of Macaulay2.
\end{acknowledgement}

%%%%%%%%%%%%%%%%%%%%%%%%%%%%%%%%%%%%%%%%%%%%%%%%%%%

\end{document}